\newtheorem{e-proposition}[theorem]{Proposition}
\newtheorem{e-definition}[theorem]{Definition\rm}
\newtheorem{theoreme}{Th\'eor\`eme}[section]
\newtheorem{lemme}[theoreme]{Lemme}
\newtheorem{proposition}[theoreme]{Proposition}
\newcounter{steps}
\newenvironment{proof}[1][]{%
\par\medbreak\setcounter{steps}{0}
{\noindent\bfseries Preuve#1. }} {\hfill\fbox{\ }\medbreak}
\newcounter{substeps}[steps]
\def\og{\leavevmode\raise.3ex\hbox{$\scriptscriptstyle\langle\!\langle$~}}
\def\fg{\leavevmode\raise.3ex\hbox{~$\!\scriptscriptstyle\,\rangle\!\rangle$}}
\newcommand\R{{\mathbb R}}
\newcommand{\lime}{
\lim _{\varepsilon \searrow 0}}
\newcommand{\calE}{
{\mathcal E}}
\newcommand{\calR}{
{\mathcal R}}
\newcommand{\rotp}{
{\mathcal R}\left ( \frac{\oc t}{\eps} \right ) }
\newcommand{\rotm}{
{\mathcal R}\left (-\frac{\oc t}{\eps} \right ) }
\newcommand{\vorth}{
\;^\bot v}
\newcommand{\tvorth}{
\;^\bot \tilde{v}}
\newcommand{\dd}{
\mathrm{d}}
\newcommand{\dive}{
\mathrm{div}}
\newcommand{\eps}[0]{
\varepsilon}
\newcommand{\oc}[0]{
\omega _c}
\newcommand{\fe}[0]{
f ^\varepsilon}
\newcommand{\Xe}[0]{
X ^\varepsilon}
\newcommand{\Ve}[0]{
V ^\varepsilon}
\newcommand{\Veorth}[0]{
\;^\bot V ^\varepsilon}
\newcommand{\tXe}[0]{
\tilde{X} ^\varepsilon}
\newcommand{\tVe}[0]{
\tilde{V} ^\varepsilon}
\newcommand{\tX}[0]{
\tilde{X}}
\newcommand{\tV}[0]{
\tilde{V}}
\newcommand{\rhoe}[0]{
\rho ^\varepsilon}
\newcommand{\tfe}[0]{
\tilde{f} ^\varepsilon}
\newcommand{\tf}[0]{
\tilde{f}}
\newcommand{\phie}[0]{
\phi ^\varepsilon}
\newcommand{\tphi}[0]{
\tilde{\phi}}
\newcommand{\Be}[0]{
B ^\varepsilon}
\newcommand{\fin}[0]{
f ^{\mathrm{in}}}
\newcommand{\inttxtv}[1]{
\int_{\R^2}\!\! \int _{\R^2} #1 \;\mathrm{d}\tilde{v}\mathrm{d}\tilde{x}}
\newcommand{\inttytw}[1]{
\int_{\R^2}\!\! \int _{\R^2} #1 \;\mathrm{d}\tilde{w}\mathrm{d}\tilde{y}}
\newcommand{\intyw}[1]{
\int_{\R^2}\!\! \int _{\R^2} #1 \;\mathrm{d}w \mathrm{d}y }
\newcommand{\intv}[1]{
\int_{\R^2} #1 \;\mathrm{d}v}
\newcommand{\inty}[1]{
\int_{\R^2} #1 \;\mathrm{d}y}
\newcommand{\ind}[1]{
{\bf 1}_{\{#1\}}}
\newcommand{\tx}[0]{
\tilde{x}}
\newcommand{\tv}[0]{
\tilde{v}}
\newcommand{\ty}[0]{
\tilde{y}}
\newcommand{\tw}[0]{
\tilde{w}}
\newcommand{\avetpi}[0]{
\frac{1}{2\pi}\int _0 ^{2\pi}\!\!\!\!}
\begin{document}

\begin{frontmatter}




%
\selectlanguage{english}
\title{The effective Vlasov-Poisson system for strongly magnetized plasmas}

\vspace{-2.6cm}
\selectlanguage{francais}
\title{Le syst\`eme de Vlasov-Poisson effectif pour les plasmas fortement magn\'etis\'es}



\author[authorlabel1]{Miha\"i BOSTAN}
\ead{mihai.bostan@univ-amu.fr}
\author[authorlabel1]{Aur\'elie FINOT}
\ead{aurelie.finot@univ-amu.fr}
\author[authorlabel1]{Maxime HAURAY}
\ead{maxime.hauray@univ-amu.fr}

\address[authorlabel1]{Institut de Math\'ematiques de Marseille I2M, Centre de Math\'ematiques et Informatique CMI, UMR CNRS 7373, 39 rue Fr\'ed\'eric Joliot Curie, 13453 Marseille  Cedex 13
France}

\begin{abstract}
Nous \'etudions le r\'egime du rayon de Larmor fini pour le syst\`eme de Vlasov-Poisson, dans le cas o\`u la longueur de Debye est \'egale au rayon de Larmor. Le champ magn\'etique est suppos\'e uniforme. Nous restreignons l'\'etude de ce probl\`eme non lin\'eaire au cas bi-dimensionnel. Nous obtenons le mod\`ele limite en appliquant les m\'ethodes de gyro-moyenne cf. \cite{BosAsyAna}, \cite{BosTraEquSin}.  Nous donnons l'expression explicite du champ d'advection effectif de l'\'equation de Vlasov, dans laquelle nous avons substitu\'e le champ \'electrique auto-consistant, via la r\'esolution de l'\'equation de Poisson moyenn\'ee \`a l'\'echelle cyclotronique. Nous mettons en \'evidence la structure hamiltonienne du mod\`ele limite et pr\'esentons ses propri\'et\'es~: conservations de la masse, de l'\'energie cin\'etique, de l'\'energie \'electrique, etc.
\vskip 0.5\baselineskip 
\selectlanguage{francais}
\noindent{\bf Abstract} \vskip 0.5\baselineskip \noindent 
We study the finite Larmor radius regime for the Vlasov-Poisson system. The magnetic field is assumed to be uniform. We investigate this non linear problem in the two dimensional setting. We derive the limit model by appealing to gyro-average methods cf. \cite{BosAsyAna}, \cite{BosTraEquSin}. We indicate the explicit expression of the effective advection field, entering the Vlasov equation, after substituting the self-consistent electric field, obtained by the resolution of the averaged (with respect to the cyclotronic time scale) Poisson equation. We emphasize the Hamiltonian structure of the limit model and present its properties~: conservationss of the mass, kinetic energy, electric energy, etc.
\end{abstract}
\end{frontmatter}
\selectlanguage{english}
\section*{Abridged English version}
\noindent
Motivated by the magnetic confinement fusion, which is one of the main application in plasma physics, we analyse the dynamics of a population of charged particles, under the action of a strong uniform magnetic field. The goal of this note is to study the finite Larmor radius regime, that is, we assume that the particle distribution fluctuates at the Larmor radius scale along the orthogonal directions, with respect to the magnetic field \cite{FreSon98}, \cite{FreSon01}, \cite{Han-Kwan12}. To simplify, we consider the two dimensional setting, {\it i.e.,} $x = (x_1, x_2), v = (v_1, v_2)$, with a magnetic field orthogonal to $x_1Ox_2$. We chose a regime such that 
\begin{enumerate}
\item
The reference time $T$ is much larger than the cyclotronic period (strong magnetic field) {\it i.e.,} 
\begin{equation}
\label{EquLoi1}
T \frac{q|\Be|}{m} \approx \frac{1}{\eps}, \mbox{ with } 0 < \eps << 1~;
\end{equation}
Notice that the above hypothesis writes also $\frac{TV}{\rho _L} \approx \frac{1}{\eps}$, where $V$ is the reference velocity (along the orthogonal directions), and $\rho _L$ is the typical Larmor radius.
\item
The kinetic energy is much larger than the potential energy
\begin{equation}
\label{EquLoi2}
\frac{m |V|^2}{q\phi } \approx \frac{1}{\eps}
\end{equation}
where $m$ is the particle mass, $q$ is the particle charge, and $\phi$ is the reference electric potential.
\item
The Larmor radius is of the same order as the Debye length {\it i.e.,}
\begin{equation}
\label{EquLoi3}
\lambda ^2 _D = \frac{\eps _0 \phi }{n q} \approx \rho ^2 _L.
\end{equation}
Here $\eps _0$ is the electric permittivity of the vacuum and $n$ is the charge concentration. 
\end{enumerate}
Accordingly, the presence density $\fe = \fe (t,x,v)$ and the electric potential $\phie$ satisfy the following Vlasov-Poisson system, up to some multiplicative constants, of order one
\begin{equation}
\label{Equ1}
\partial _t \fe + \frac{1}{\eps}( v \cdot \nabla _x \fe + \oc \vorth \cdot \nabla _v \fe ) - \nabla _x \phie \cdot \nabla _v \fe = 0,\;\;(t,x,v) \in \R_+ \times \R ^2 \times \R^2
\end{equation}
\begin{equation}
\label{Equ2}
- \Delta _x \phie = \rho ^\eps := \intv{\fe (t,x,v)},\;\;(t,x) \in \R _+ \times \R ^2
\end{equation}
\begin{equation}
\label{EquIC}
\fe (0,x,v) = \fin (x,v),\;\;(x,v) \in \R ^2 \times \R^2. 
\end{equation}
Here $\oc$ stands for the rescaled cyclotronic frequency (the real cyclotronic frequency is $\omega _c ^\eps = \oc /\eps$, and the real cyclotronic period is $T_c ^\eps = \frac{2\pi}{\oc ^\eps} = \eps \frac{2\pi}{\oc} = \eps T_c$), and it is assumed constant (uniform magnetic field). For any $v = (v_1, v_2) \in \R^2$, we denote by $\vorth$ the vector $\vorth = (v_2, - v_1) \in \R^2$. We study the stability of the family $(\fe, \phie)_{\eps >0}$, when $\eps$ becomes small. The asymptotic behavior follows by filtering out the fast oscillations of the caracteristic equations for \eqref{Equ1}. It is easily seen that the changes over one cyclotronic period of the quantities $\tx = x + \frac{\vorth}{\oc}$, $\tv = {\mathcal R}(\oc t/\eps)v$, are negligible. We expect that the family $\tfe (t, \tx, \tv) = \fe (t, \tx - {\mathcal R}(-\oc t/\eps) \tvorth /\oc, {\mathcal R}(- \oc t/\eps ) \tv)$ converges, as $\eps$ becomes small, toward some profile $\tf (t, \tx, \tv)$. 
\begin{theoreme}
\label{MainResult1}
Let $\fin = \fin (x,v)$ be a non negative, smooth, compactly supported presence density and $(\fe, \phie)_{\eps >0}$ be the solutions of the Vlasov-Poisson sistem \eqref{Equ1}, \eqref{Equ2}, \eqref{EquIC}. We denote by $\tf = \tf (t, \tx, \tv)$ the solution of
\begin{equation}
\label{EquAveVla}
\partial _t \tf + {\mathcal V}[\tf (t)] (\tx, \tv) \cdot \nabla _{\tx} \tf + {\mathcal A}[\tf (t)] (\tx, \tv) \cdot \nabla _{\tv} \tf = 0,\;\;(t, \tx, \tv) \in \R_+ \times \R ^2 \times \R^2
\end{equation}
with the initial condition
\begin{equation}
\label{EquAveIC}
\tf (0, \tx, \tv) = \fin \left (\tx - \frac{\tvorth}{\oc}, \tv \right ),\;\;(\tx, \tv) \in \R^2 \times \R^2
\end{equation}
where the velocity and acceleration vector fields ${\mathcal V}, {\mathcal A}$ are given by
\[
{\mathcal V}[\tf (t)] (\tx, \tv) = - \frac{^\bot \nabla _{\tx}}{\oc} \tphi [\tf (t)],\;\;{\mathcal A}[\tf (t)] (\tx, \tv) = \oc \;^\bot \nabla _{\tv} \tphi [\tf (t)]
\]
\[
\tphi [\tf(t)] (\tx, \tv) = - \frac{1}{2\pi} \inttytw{\left \{\ln \frac{|\tv - \tw|}{|\oc|} \;\ind{|\tx - \ty| \leq \frac{|\tv - \tw|}{|\oc|} }
+ \ln |\tx - \ty|\;\ind{|\tx - \ty| > \frac{|\tv - \tw|}{|\oc|} }\right \}
\tf (t, \ty, \tw)}.
\]
Therefore $\fe (t,x,v) - \tf(t, x + \vorth/\oc, {\mathcal R} (\oc t/\eps)v) = o(1)$ when $\eps \searrow 0$.
\end{theoreme}


\selectlanguage{francais}
\section{Trajectoires effectives}
\noindent

Ce travail s'inscrit dans le cadre de la mod\'elisation des plasmas de fusion. Nous concentrons notre \'etude au r\'egime du rayon de Larmor fini pour le syst\`eme de Vlasov-Poisson bi-dimensionnel. Sous les hypoth\`eses \eqref{EquLoi1}, \eqref{EquLoi2}, \eqref{EquLoi3}, ce r\'egime est d\'ecrit par \eqref{Equ1}, \eqref{Equ2}, \eqref{EquIC}. La m\'ethode d\'evelopp\'ee ici consiste \`a exprimer le potentiel \'electrique \`a l'aide de la solution fondamentale de l'op\'erateur de Laplace dans $\R^2$, puis ins\'erer cette expression dans les trajectoires de l'\'equation de Vlasov. Nous obtenons alors, \`a l'aide des m\'ethodes classiques de gyro-moyenne \cite{BosAsyAna}, \cite{BosTraEquSin}, \cite{BosGuiCen3D} les trajectoires limites et ainsi, les expressions effectives des champs vitesse et acc\'el\'eration de la nouvelle \'equation de Vlasov, d\'ecrivant le r\'egime asymptotique consid\'er\'e. Pour plus de d\'etails sur les preuves de ces r\'esultats, nous renvoyons \`a \cite{BosFinHau15}.

Notons $e$ la solution fondamentale de l'op\'erateur de Laplace dans $\R^2$
\[
e(z) = - \frac{1}{2\pi} \ln |z|,\;\;z \in \R^2 \setminus \{0\}
\]
c'est-\`a-dire $- \Delta e = \delta _0$ dans ${\mathcal D}^\prime (\R^2)$. Le potentiel \'electrique, solution de l'\'equation de Poisson \eqref{Equ2}, s'\'ecrit donc
\begin{equation}
\label{Equ21}
\phie (t,x) = \inty{e(x-y) \rhoe (t,y) } = \intyw{e(x-y) \fe (t,y,w)}.
\end{equation}
Les \'equations caract\'eristiques de l'\'equation de transport \eqref{Equ1} sont donn\'ees par
\begin{equation*}
\label{Equ22}
\frac{\dd \Xe}{\dd t} = \frac{\Ve (t)}{\eps},\;\;\frac{\dd \Ve }{\dd t} = \oc \frac{\Veorth (t)}{\eps} - \nabla _x \phie (t, \Xe (t)),\;\;(\Xe (0), \Ve(0)) = (x,v).
\end{equation*}
Nous cherchons des quantit\'es qui varient peu sur une p\'eriode cyclotronique. Plus exactement, \`a tout instant fix\'e $t>0$, on introduit le changement de coordonn\'ees
\[
\tx = x + \frac{\vorth}{\oc},\;\;\tv = {\mathcal R} \left (  \frac{\oc t}{\eps}  \right ) v
\]
o\`u $\calR(\theta)$ d\'esigne la rotation de $\R^2$ d'angle $\theta$. On v\'erifie ais\'ement que le d\'eterminant jacobien vaut $1$ et alors ces transformations pr\'eservent la mesure de Lebesgue de $\R^4$ {\it i.e.,} $\dd\tv \dd \tx = \dd v \dd x$. En effet, $\tx$ est le centre du cercle de Larmor d'\'ecrit par une particule passant par $x$ avec la vitesse $v$. Ce centre ne varie pas \`a l'\'echelle du mouvement rapide cyclotronique, correspondant \`a la fr\'equence cyclotronique $\frac{\oc}{\eps}$. Plus exactement on a
\begin{equation}
\label{Equ23}
\frac{\dd \tXe}{\dd t} = - \frac{^\bot \nabla _x \phie }{\oc} (t, \Xe (t)) = - \frac{^\bot \nabla _x \phie}{\oc} \left (t, \tXe (t) - \frac{\calR}{\oc}\left ( -  \frac{\oc t}{\eps} \right ) {^\bot\tVe }(t)\right )
\end{equation}
\begin{equation}
\label{Equ24}
\frac{\dd \tVe}{\dd t} = - \calR \left (   \frac{\oc t}{\eps} \right )
\nabla _x \phie  (t, \Xe (t)) = - \calR \left (   \frac{\oc t}{\eps} \right )\nabla _x \phie  \left (t, \tXe (t) - \frac{\calR}{\oc}\left ( -  \frac{\oc t}{\eps} \right ) {^\bot \tVe} (t)\right ).
\end{equation}
On souhaite remplacer le potentiel \'electrique par l'\'expression de \eqref{Equ21}. On introduit les densit\'es de pr\'esence en les coordonn\'ees $(\tx, \tv)$
\[
\fe (t,x,v) = \tfe (t, \tx, \tv),\;\;\tx = x+ \frac{\vorth}{\oc}, \tv = \rotp{} v .
\]
Ainsi, \eqref{Equ21} conduit \`a 
\begin{align*}
\phie \left (t, \tXe (t) - \frac{\calR}{\oc}\left ( -  \frac{\oc t}{\eps} \right ) {^\bot \tVe }(t)\right ) = \inttytw{\!\!\!\!e \left ( \tXe (t) - \ty - \frac{\calR}{\oc}\left ( -  \frac{\oc t}{\eps} \right ) ^\bot ( \tVe (t) - \tw)  \right )\tfe (t, \ty, \tw)}
\end{align*}
et par cons\'equent, \eqref{Equ23}, \eqref{Equ24} deviennent
\begin{equation}
\label{Equ25}
\frac{\dd \tXe }{\dd t} = - \frac{1}{\oc} \inttytw{{^\bot \nabla} e \left ( \tXe (t) - \ty - \frac{1}{\oc} \rotm{} ^\bot ( \tVe (t) - \tw)  \right )\tfe (t, \ty, \tw)}
\end{equation}
\begin{equation}
\label{Equ26}
\frac{\dd \tVe }{\dd t} = - \rotp{} \inttytw{ \nabla e \left ( \tXe (t) - \ty - \frac{1}{\oc} \rotm{} ^\bot ( \tVe (t) - \tw)  \right )\tfe (t, \ty, \tw)}.
\end{equation}
En prenant la moyenne de \eqref{Equ25} sur la p\'eriode cyclotronique $[t, t+T_c ^\eps]$, avec $T_c ^\eps = \eps \frac{2\pi}{\oc}$, et en introduisant la variable rapide $s = (\tau - t)/\eps$, $\tau \in [t, t + T_c ^\eps]$, nous obtenons
\begin{align}
\label{Equ27}
& \frac{\tXe (t + T_c ^\eps) - \tXe (t)}{T_c ^\eps}  = - \frac{1}{\oc T_c ^\eps}\int _t ^{t + T_c ^\eps} \!\!\!\!\inttytw{\!\!\!\!{^\bot \nabla }e \left( \tXe (\tau) - \ty - \calR \left (- \frac{\oc \tau}{\eps}  \right ) \frac{^\bot ( \tVe (\tau) - \tw)}{\oc} \right ) \tfe(\tau)}\dd \tau \nonumber \\
& = - \frac{1}{\oc T_c} \int _0 ^{T_c} \inttytw{{^\bot \nabla }e \left( \tXe (t + \eps s) - \ty - {\mathcal R}\left (- \frac{\oc t }{\eps} - \oc s \right ) \frac{^\bot ( \tVe (t + \eps s) - \tw)}{\oc} \right ) \tfe(t + \eps s)} \dd s \nonumber \\
& \approx - \frac{^\bot \nabla _{\xi}}{\oc} \inttytw{
{\mathcal E}(\tXe (t) - \ty, \tVe (t) - \tw) \tfe(t, \ty, \tw)}\dd \theta
\end{align}
o\`u la fonction ${\mathcal E}$ est d\'efinie par
\[
{\mathcal E} (\xi, \eta) = \avetpi{} e \left (\xi - \oc ^{-1} \calR (\theta){ ^\bot \eta} \right ) \;\dd \theta,\;\;\xi, \eta \in \R^2.
\]
Nous proc\'edons de la mani\`ere identique pour obtenir, \`a partir de \eqref{Equ26}
\begin{align}
\label{Equ28}
& \frac{\tVe (t + T_c ^\eps) - \tVe (t)}{T_c ^\eps}  = - \frac{1}{ T_c ^\eps}\int _t ^{t + T_c ^\eps} \!\!\!\!\!\!\calR \left ( \frac{\oc \tau}{\eps} \right ) \int{{\nabla }e \left( \tXe (\tau) - \ty - \calR \left (- \frac{\oc \tau}{\eps}  \right ) \frac{^\bot ( \tVe (\tau) - \tw)}{\oc} \right ) \tfe(\tau)}\dd \tw \dd \ty\dd \tau \nonumber \\
& = \!- \frac{1}{T_c} \!\!\int _0 ^{T_c} \!\!\!\!\!\!\calR \left ( \frac{\oc t}{\eps} + \oc s\right )\!\!\!\int{\!\!{\nabla }e \left( \tXe (t + \eps s) - \ty - {\mathcal R}\left (- \frac{\oc t }{\eps} - \oc s \right ) \frac{^\bot ( \tVe (t + \eps s) - \tw)}{\oc} \right ) \tfe(t + \eps s)} \dd \tw \dd \ty\dd s \nonumber \\
& \approx \oc {^\bot \nabla _{\eta}} \inttytw{
{\mathcal E}(\tXe (t) - \ty, \tVe (t) - \tw )\tfe(t, \ty, \tw)}\dd \theta.
\end{align}
En passant \`a la limite dans \eqref{Equ27}, \eqref{Equ28}, quand $\eps \searrow 0$, nous obtenons les trajectoires apr\`es filtration du mouvement rapide cyclotronique
\begin{equation*}
\label{Equ29}
\frac{\dd \tX }{\dd t} = - \frac{ ^\bot \nabla _\xi }{\oc} \inttytw{{\mathcal E}(\tX (t) - \ty, \tV (t) - \tw)\tf(t)},\;\;\frac{\dd \tV }{\dd t} = \oc { ^\bot \nabla _\eta } \inttytw{{\mathcal E}(\tX (t) - \ty, \tV (t) - \tw)\tf(t)}
\end{equation*}
o\`u $\tf = \lime \tfe$ est la distribution limite. Par la suite nous d\'eterminons une expression pour ${\mathcal E}(\xi, \eta)$. Cela r\'esulte de la propri\'et\'e de la moyenne pour les fonctions harmoniques. En effet, si $|\xi | > |\eta |/|\oc|$, la fonction $z \to e(z)$ est harmonique dans l'ouvert $\R ^2 \setminus \{0\}$, contenant le disque ferm\'e, de centre $\xi$ et de rayon $|\eta |/|\oc|$, et par cons\'equent nous avons, gr\^ace \`a la formule de la moyenne
\[
{\mathcal E}(\xi, \eta) = e(\xi) = - \frac{1}{2\pi} \ln |\xi|,\;\;|\xi | > \frac{|\eta|}{|\oc|}.
\]
Plus exactement, on d\'emontre cf. \cite{BosFinHau15}
\begin{lemme}
\label{CalcInt}
Pour tout $\xi, \eta \in \R^2$, nous avons 
\[
\calE (\xi, \eta) = e \left ( \frac{\eta}{\oc}\right ) \ind{|\xi | \leq |\eta|/|\oc|} + e(\xi) \ind{|\xi | > |\eta|/|\oc|}
\]
\[
\nabla _\xi \calE (\xi, \eta)= \nabla e (\xi) \;\ind{|\xi | > |\eta|/|\oc|},\;\;\nabla _\eta \calE (\xi, \eta)= \oc ^{-1} \nabla e \left ( \frac{\eta}{\oc}\right ) \;\ind{|\xi | \leq |\eta|/|\oc|}\;\;\mbox{au sens des distributions}.
\]
\end{lemme}

\section{Le mod\`ele limite}

Nous introduisons le potentiel \'electrique
\begin{align*}
\label{Equ45}
\tphi [\tf (t) ] (\tx, \tv) & = \inttytw{\calE(\tx - \ty, \tv - \tw) \tf(t, \ty, \tw)} \\
& = \inttytw{\left \{ e \left ( \frac{\tv - \tw}{\oc}\right ) \ind{|\tx - \ty | \leq |\tv - \tw|/|\oc|} + e(\tx - \ty) \;\ind{|\tx - \ty | > |\tv - \tw|/|\oc|} \right \}\tf(t, \ty, \tw)} \nonumber 
\end{align*}
et les fonctions
\begin{equation*}
\label{Equ35}
{\mathcal V}[\tf(t)] (\tx, \tv) = - \frac{^\bot \nabla _\xi }{\oc} \inttytw{\calE (\tx - \ty, \tv - \tw) \tf(t,\ty, \tw)} = - \frac{^\bot \nabla _{\tx}}{\oc} \tphi [\tf (t)]
\end{equation*}
\begin{equation*}
\label{Equ36}
{\mathcal A}[\tf(t)] (\tx, \tv) = \oc {^\bot \nabla _\eta } \inttytw{\calE (\tx - \ty, \tv - \tw) \tf(t,\ty, \tw)} = \oc {^\bot \nabla _{\tv}} \tphi [\tf (t)].
\end{equation*}
En d\'erivant sous le signe int\'egral, il est \'egalement possible de repr\'esenter les champs vitesse et acc\'el\'eration sous la forme (cf. Lemme \ref{CalcInt})
\begin{align}
\label{Equ37}
{\mathcal V}[\tf(t)] (\tx, \tv) & = - \frac{1}{\oc} \inttytw{{^\bot \nabla _\xi}\calE (\tx - \ty, \tv - \tw) \tf(t,\ty, \tw)} \\
& = - \frac{1}{\oc} \inttytw{{^\bot \nabla e} (\tx - \ty) \;\ind{|\tx - \ty | > |\tv - \tw|/|\oc|} \tf(t, \ty, \tw)} \nonumber 
\end{align}
\begin{align}
\label{Equ38}
{\mathcal A}[\tf(t)] (\tx, \tv) & = \oc \inttytw{{^\bot \nabla _\eta}\calE (\tx - \ty, \tv - \tw) \tf(t,\ty, \tw)} \\
& = \oc \inttytw{{^\bot \nabla e} 
\left ( \frac{\tv - \tw}{\oc}\right ) \ind{|\tx - \ty | \leq |\tv - \tw|/|\oc|} \tf(t, \ty, \tw)}. \nonumber 
\end{align}
Les trajectoires limites sont d\'etermin\'ees par les champs vitesse et acc\'el\'eration ${\mathcal V}[\tf], {\mathcal A}[\tf]$
\begin{equation*}
\label{Equ46}
\frac{\dd \tX }{\dd t} = {\mathcal V}[\tf (t)] (\tX (t), \tV (t)),\;\;\frac{\dd \tV }{\dd t} = {\mathcal A}[\tf (t)] (\tX (t), \tV (t)).
\end{equation*}
Les densit\'es de pr\'esence \'etant conserv\'ees le long des trajectoires, nous obtenons
\[
\tfe (t, \tXe (t), \tVe(t)) = \fe (t, \Xe (t), \Ve (t)) = f(0, x, v) = f(0, \tx - \oc ^{-1} \tvorth, \tv)
\]
et par cons\'equent la densit\'e limite $\tf$ est solution de \eqref{EquAveVla}, \eqref{EquAveIC}. Notons que les \'equations caract\'eristiques limites forment un syst\`eme hamiltonien, en les variables conjugu\'ees $(\tx _2, \oc ^{-1} \tv_1)$ et $(\oc \tx_1, \tv_2)$
\[
\frac{\dd \tX_2}{\dd t} = \frac{\partial \tphi [\tf(t)]}{\partial (\oc \tx _1)}(\tX (t), \tV(t)),\;\;\frac{\dd (\oc ^{-1} \tV_1)}{\dd t} = \frac{\partial \tphi [\tf(t)]}{\partial \tv _2}(\tX (t), \tV(t))
\]
\[
\frac{\dd (\oc \tX_1)}{\dd t} = - \frac{\partial \tphi [\tf(t)]}{\partial \tx _2}(\tX (t), \tV(t)),\;\;\frac{\dd  \tV_2}{\dd t} = -\frac{\partial \tphi [\tf(t)]}{\partial (\oc ^{-1} \tv _1)}(\tX (t), \tV(t)).
\]

\section{Quelques propri\'et\'es du mod\`ele limite}

Les champs de vitesse et acc\'el\'eration \'etant \`a divergence nulle
\[
\dive _{\tx} {\mathcal V} [\tf (t)] = - \frac{1}{\oc} \dive _{\tx} \;^\bot \nabla _{\tx} \tphi [\tf (t)] = 0,\;\;\dive _{\tv} {\mathcal A} [\tf (t)] = \oc  \dive _{\tv} \;^\bot \nabla _{\tv} \tphi [\tf (t)] = 0
\]
l'\'equation \eqref{EquAveVla} s'\'ecrit aussi sous la forme conservative
\[
\partial _t \tf + \dive _{\tx} \{ \tf {\mathcal V}[\tf(t)]\} + \dive _{\tv} \{ \tf {\mathcal A}[\tf(t)]\} = 0.
\]
En particulier nous obtenons la conservation de la masse. Plus g\'en\'eralement, nous d\'emontrons le r\'esultat suivant. 
\begin{proposition}
\label{Conserv}
Soit $\tf = \tf (t,\tx,\tv)$ la solution du probl\`eme \eqref{EquAveVla}, \eqref{EquAveIC} et $\psi = \psi (\tx, \tv)$ une fonction int\'egrable par rapport \`a $\tf (0,\tx, \tv)\dd\tv \dd \tx = \fin (\tx - \oc ^{-1} {^\bot \tv}, \tv)\dd \tv \dd \tx$.
\begin{enumerate}
\item
Pour tout $t \in \R_+$ nous avons
\begin{align}
\label{Equ41}
& 2 \frac{\dd }{\dd t} \inttxtv{\psi (\tx, \tv) \tf (t, \tx, \tv) }  = \int _{\R^2} \!\!\int _{\R^2}\!\!\int _{\R^2}\!\!\int _{\R^2}\!\!
\tf(t,\ty,\tw)\tf (t, \tx, \tv) \\
& \times 
\left [ 
\frac{1}{\oc} \left (\nabla _{\ty} \psi (\ty, \tw)   - \nabla _{\tx} \psi (\tx, \tv) \right )\cdot {^\bot  \nabla} e (\tx - \ty) \;\ind{|\tx - \ty| > |\tv - \tw|/|\oc|} 
\right. \nonumber \\
& + 
\left.
\left (\nabla _{\tv} \psi (\tx, \tv)   - \nabla _{\tw} \psi (\ty, \tw) \right )\cdot {^\bot  \nabla} e \left ( \frac{\tv - \tw}{\oc}\right ) \ind{|\tx - \ty| < |\tv - \tw|/|\oc|} 
\right ]
\;\dd\tw \dd\ty \dd \tv \dd \tx. \nonumber 
\end{align}
\item
En particulier, pour tout $t \in \R_+$ nous avons
\[
\inttxtv{\{1, \tx, \tv, |\tx |^2, |\tv |^2\}\tf (t, \tx, \tv)} = \inttxtv{\{1, \tx, \tv, |\tx |^2, |\tv |^2\}\fin (\tx - \oc ^{-1} \tvorth, \tv)}.
\]
\end{enumerate}
\end{proposition}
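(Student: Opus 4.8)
The plan is to compute $\frac{\dd}{\dd t}\inttxtv{\psi(\tx,\tv)\tf(t,\tx,\tv)}$ by using the conservative form of \eqref{EquAveVla} and integrating by parts, then to \emph{symmetrize} the resulting double integral in the exchange $(\tx,\tv)\leftrightarrow(\ty,\tw)$. First I would multiply \eqref{EquAveVla} by $\psi$, integrate over $\R^2\times\R^2$, and move the derivatives onto $\psi$ (the boundary terms vanish since $\fin$, hence $\tf(t)$, is compactly supported — this propagates along the characteristics, which have bounded speed on the support because $e$ is only logarithmically singular and its gradient is $L^1_{\mathrm{loc}}$). This yields
\[
\frac{\dd}{\dd t}\inttxtv{\psi\,\tf(t)} = \inttxtv{\left(\nabla_{\tx}\psi\cdot{\mathcal V}[\tf(t)] + \nabla_{\tv}\psi\cdot{\mathcal A}[\tf(t)]\right)\tf(t)}.
\]
Next I substitute the kernel representations \eqref{Equ37}, \eqref{Equ38} for ${\mathcal V}$ and ${\mathcal A}$, which turns the right-hand side into a double integral against $\tf(t,\tx,\tv)\tf(t,\ty,\tw)$ with integrand
\[
-\tfrac1{\oc}\nabla_{\tx}\psi(\tx,\tv)\cdot{}^\bot\nabla e(\tx-\ty)\,\ind{|\tx-\ty|>|\tv-\tw|/|\oc|} + \oc\,\nabla_{\tv}\psi(\tx,\tv)\cdot{}^\bot\nabla e\!\left(\tfrac{\tv-\tw}{\oc}\right)\ind{|\tx-\ty|<|\tv-\tw|/|\oc|}.
\]

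The key step is then to write the same integral with the roles of $(\tx,\tv)$ and $(\ty,\tw)$ swapped — legitimate because we integrate over the full product space against the symmetric weight $\tf(t,\tx,\tv)\tf(t,\ty,\tw)$ — and add the two copies, dividing by $2$; this is exactly why the factor $2$ appears on the left of \eqref{Equ41}. Under the swap, ${}^\bot\nabla e(\ty-\tx) = -\,{}^\bot\nabla e(\tx-\ty)$ (oddness of $\nabla e$), and ${}^\bot\nabla e\!\left(\tfrac{\tw-\tv}{\oc}\right) = -\,{}^\bot\nabla e\!\left(\tfrac{\tv-\tw}{\oc}\right)$, while the indicator functions are symmetric. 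Thus the $\tx$-term combines with the swapped $\ty$-term into $\tfrac1{\oc}\left(\nabla_{\ty}\psi(\ty,\tw) - \nabla_{\tx}\psi(\tx,\tv)\right)\cdot{}^\bot\nabla e(\tx-\ty)\,\ind{|\tx-\ty|>|\tv-\tw|/|\oc|}$, and similarly for the velocity term, giving precisely the right-hand side of \eqref{Equ41}. Part (2) follows by specializing $\psi \in \{1,\tx_i,\tv_i,|\tx|^2,|\tv|^2\}$: for $\psi=1$ the integrand is $0$; for $\psi=\tx_i$ or $\psi=\tv_i$ the difference of gradients is $0$ (constant gradient); and for $\psi=|\tx|^2$ the antisymmetric combination $(\ty-\tx)\cdot{}^\bot\nabla e(\tx-\ty)$ vanishes because $\nabla e(z) = -\tfrac{z}{2\pi|z|^2}$ is parallel to $z$, so ${}^\bot\nabla e(z)\perp z$ and hence ${}^\bot\nabla e(\tx-\ty)\cdot(\tx-\ty)=0$ (likewise for $|\tv|^2$ with argument $\tfrac{\tv-\tw}{\oc}$).

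The main obstacle is the \emph{rigorous justification of the integrations by parts and of the swap}, given the singularity of $\nabla e$ on the diagonal. One handles this by noting that $\nabla e(z)\,\ind{|z|>r}$ is bounded for each fixed $r>0$, and more importantly that in the double integral the singular set $\{\tx=\ty\}$ carries zero mass: near it the relevant indicator is $\ind{|\tx-\ty|>|\tv-\tw|/|\oc|}$, which forces $|\tv-\tw|<|\oc|\,|\tx-\ty|\to 0$, so the singularity of $\nabla e(\tx-\ty)\sim|\tx-\ty|^{-1}$ is integrated against a region of measure $O(|\tx-\ty|^2)$ in $(\ty,\tw)$ — integrable. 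For the velocity term the singularity of $\nabla e\!\left(\tfrac{\tv-\tw}{\oc}\right)$ at $\tv=\tw$ is likewise tamed by $\ind{|\tx-\ty|<|\tv-\tw|/|\oc|}$. So all integrals are absolutely convergent, Fubini applies, and the formal manipulations are valid; the finer regularity and compact-support statements needed here are established in the companion paper \cite{BosFinHau15}, to which we refer.
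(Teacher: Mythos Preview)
Your proposal is correct and follows exactly the paper's own argument: compute the time derivative via the transport equation, insert the kernel representations \eqref{Equ37}--\eqref{Equ38}, and symmetrize by the exchange $(\tx,\tv)\leftrightarrow(\ty,\tw)$ using the oddness of $\nabla e$ and Fubini; part~(2) then follows by specializing $\psi$. You even spell out more than the paper does, namely the orthogonality ${}^\bot\nabla e(z)\cdot z=0$ that kills the $|\tx|^2$ and $|\tv|^2$ contributions, and the integrability near the diagonal.
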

\begin{proof}
\begin{enumerate}
\item
Pour tout $t \in \R_+$, nous obtenons, gr\^ace aux formules de repr\'esentation \eqref{Equ37}, \eqref{Equ38}
\begin{align*}
\frac{\dd }{\dd t}&  \inttxtv{\psi (\tx, \tv) \tf (t, \tx, \tv) }  =
\inttxtv{\psi(\tx, \tv) \partial _t \tf} \\
& = \inttxtv{\left [\nabla _{\tx} \psi \cdot {\mathcal V}[\tf(t)] + \nabla _{\tv}\psi  \cdot {\mathcal A} [\tf(t)]   \right ]\tf(t, \tx, \tv)} \\
& = - \frac{1}{\oc} \int _{\R^2}\!\!\int _{\R^2}\!\!\int _{\R^2}\!\!\int _{\R^2}\!\!\nabla _{\tx} \psi (\tx, \tv) \cdot {^\bot \nabla} e (\tx - \ty) \;\ind{|\tx - \ty| > |\tv - \tw|/|\oc|} \tf(t,\ty,\tw) \tf(t,\tx,\tv) \;\dd \tw \dd\ty \dd \tv \dd \tx \\
& + \;\;\;\;\int _{\R^2}\!\!\int _{\R^2}\!\!\int _{\R^2}\!\!\int _{\R^2}\!\!\nabla _{\tv} \psi (\tx, \tv) \cdot {^\bot \nabla} e \left(\frac{\tv - \tw}{\oc}\right) \ind{|\tx - \ty| < |\tv - \tw|/|\oc|} \tf(t,\ty,\tw) \tf(t,\tx,\tv) \;\dd \tw \dd\ty \dd \tv \dd \tx.
\end{align*}
La formule \eqref{Equ41} r\'esulte en interchangeant $(\tx, \tv)$ contre $(\ty, \tw)$, combin\'e \`a Fubini.
\item
Les conservations r\'esultent facilement, par \eqref{Equ41} appliqu\'ee successivement aux fonctions $1, \tx, \tv, |\tx |^2, |\tv |^2$. 
\end{enumerate}
\end{proof}
Etant donn\'e que l'\'energie cin\'etique est conserv\'ee, et comme on s'attend \`a ce que l'\'energie globale soit conserv\'ee, nous devrions retrouver aussi la conservation de l'\'energie \'electrique. Effectivement nous d\'emontrons 
\begin{proposition}
\label{ConservElec}
Pour tout $t \in \R_+$ nous avons
\[
\frac{\dd }{\dd t} \frac{1}{2} \inttxtv{\tphi [\tf(t)] (\tx, \tv) \tf(t,\tx, \tv)} = 0.
\]
\end{proposition}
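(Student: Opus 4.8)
\medskip
\noindent\textbf{Proof strategy.} The plan is to write the electric energy as a symmetric quadratic form in $\tf(t)$, differentiate it in time, and then exploit the divergence-free / orthogonal-gradient (Hamiltonian) structure of the fields $\mathcal{V}[\tf(t)]$, $\mathcal{A}[\tf(t)]$ recalled above. First I would use the definition of $\tphi$ to write
\[
\mathcal{W}(t) := \tfrac12 \inttxtv{\tphi [\tf(t)] (\tx, \tv)\, \tf(t,\tx, \tv)} = \tfrac12 \int _{\R^2}\!\!\int _{\R^2}\!\!\int _{\R^2}\!\!\int _{\R^2}\!\! \calE(\tx - \ty, \tv - \tw)\, \tf(t,\tx,\tv)\, \tf(t,\ty,\tw)\;\dd\tw \dd\ty \dd \tv \dd \tx,
\]
and note that, since $e(z) = -\tfrac{1}{2\pi}\ln|z|$ is even, Lemma \ref{CalcInt} gives the parity property $\calE(\xi, \eta) = \calE(-\xi, -\eta)$, so that the kernel $\calE(\tx - \ty, \tv - \tw)$ is invariant under the exchange $(\tx,\tv) \leftrightarrow (\ty,\tw)$.

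Differentiating $\mathcal{W}$ under the integral sign produces two terms, one carrying $\partial_t \tf(t,\tx,\tv)\,\tf(t,\ty,\tw)$ and one carrying $\tf(t,\tx,\tv)\,\partial_t\tf(t,\ty,\tw)$; relabelling the integration variables in the second one and using the symmetry of $\calE$ just mentioned, they coincide, whence
\[
\frac{\dd \mathcal{W}}{\dd t}(t) = \int _{\R^2}\!\!\int _{\R^2}\!\!\int _{\R^2}\!\!\int _{\R^2}\!\! \calE(\tx - \ty, \tv - \tw)\, \partial_t\tf(t,\tx,\tv)\, \tf(t,\ty,\tw)\;\dd\tw \dd\ty \dd \tv \dd \tx = \inttxtv{\tphi [\tf(t)] (\tx, \tv)\, \partial_t\tf(t,\tx, \tv)}.
\]
Then I would insert the Vlasov equation \eqref{EquAveVla} in conservative form $\partial_t \tf = - \dive_{\tx}(\tf\,\mathcal{V}[\tf(t)]) - \dive_{\tv}(\tf\,\mathcal{A}[\tf(t)])$, integrate by parts in $\tx$ and $\tv$ (the boundary contributions vanishing by the localization/decay of $\tf(t)$), to get
\[
\frac{\dd \mathcal{W}}{\dd t}(t) = \inttxtv{\big( \mathcal{V}[\tf(t)]\cdot \nabla_{\tx}\tphi[\tf(t)] + \mathcal{A}[\tf(t)]\cdot \nabla_{\tv}\tphi[\tf(t)] \big)\,\tf(t,\tx,\tv)}.
\]
Finally, since $\mathcal{V}[\tf(t)] = -\oc^{-1}\,{}^\bot\nabla_{\tx}\tphi[\tf(t)]$ and $\mathcal{A}[\tf(t)] = \oc\,{}^\bot\nabla_{\tv}\tphi[\tf(t)]$, both scalar products vanish pointwise because ${}^\bot a\cdot a = 0$ for every $a \in \R^2$, so $\frac{\dd \mathcal{W}}{\dd t}(t) = 0$.

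The algebra above is short; the genuine work lies in justifying the manipulations: differentiating under the integral sign, commuting $\nabla_{\tx}, \nabla_{\tv}$ with the integrals, and discarding the boundary terms. These rest on the smoothness and the support/decay estimates for $\tf(t)$ and for $\tphi[\tf(t)]$ and its gradients — note that $\calE$ is only locally integrable on $\R^4$ (a logarithmic singularity at the origin) and $\nabla\calE$ has a mild ($|\cdot|^{-1}$, locally integrable) singularity there, so the convolutions against the bounded, compactly supported density $\tf(t)$ are well defined and regular, as established in \cite{BosFinHau15}. So the main obstacle is really the a priori regularity theory for \eqref{EquAveVla}, \eqref{EquAveIC} that legitimizes the computation; granting it, the conservation is immediate. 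An equivalent variant avoiding one integration by parts is to use the non-conservative form of \eqref{EquAveVla}, integrate by parts once, and invoke both $\dive_{\tx}\mathcal{V}[\tf(t)] = \dive_{\tv}\mathcal{A}[\tf(t)] = 0$ and the orthogonality ${}^\bot a\cdot a = 0$.
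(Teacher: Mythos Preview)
Your proof is correct and follows essentially the same route as the paper: write the energy as the symmetric quadratic form with kernel $\calE$, use the parity of $\calE$ to reduce the time derivative to $\inttxtv{\tphi[\tf(t)]\,\partial_t \tf}$, substitute the Vlasov equation, integrate by parts, and conclude via the pointwise orthogonality ${}^\bot a\cdot a=0$. The paper's argument is terser (it does not discuss the analytic justifications you raise), but the ideas and the key steps coincide.
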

\begin{proof}
L'\'energie \'electrique s'\'ecrit sous la forme
\[
\frac{1}{2}\inttxtv{\tphi [\tf(t)] (\tx, \tv) \tf(t,\tx, \tv)} = \frac{1}{2}\int _{\R^2}\!\!\int _{\R^2}\!\!\int _{\R^2}\!\!\int _{\R^2}\!\!\calE (\tx - \ty, \tv - \tw)\tf(t,\tx,\tv) \tf (t, \ty, \tw)\;\dd \tw \dd \ty \dd \tv \dd \tx
\]
et en utilisant la parit\'e de $\calE (\xi, \eta)$ en les variables $\xi$ et $\eta$, nous obtenons facilement, par Fubini, que
\begin{align*}
\frac{\dd }{\dd t} \frac{1}{2} \inttxtv{\tphi [\tf(t)] (\tx, \tv) \tf(t,\tx, \tv)} & = \inttxtv{\tphi [\tf(t)](\tx, \tv)   \partial _t \tf }\\
& = \inttxtv{\left [ \nabla _{\tx} \tphi [\tf (t)] \cdot {\mathcal V} [\tf(t)] + \nabla _{\tv} \tphi [\tf (t)] \cdot {\mathcal A} [\tf(t)] \right ] \tf  } = 0.
\end{align*}

\end{proof}





\end{document}